\DeclareMathOperator{\gr}{gr}
\DeclareMathOperator{\op}{op}
\DeclareMathOperator{\Spec}{Spec}
\DeclareMathOperator{\Sym}{Sym}
\newcommand{\bQ}{\mathbb{Q}}
\newcommand{\bR}{\mathbb{R}}
\newcommand{\bZ}{\mathbb{Z}}
\newcommand{\cA}{\mathcal{A}}
\newcommand{\cB}{\mathcal{B}}
\newcommand{\cF}{\mathcal{F}}
\newcommand{\cM}{\mathcal{M}}
\newcommand{\cN}{\mathcal{N}}
\newcommand{\cO}{\mathcal{O}}
\newcommand{\cU}{\mathcal{U}}
\newcommand{\cV}{\mathcal{V}}
\newcommand{\cW}{\mathcal{W}}
\newcommand{\fg}{\mathfrak{g}}
\theoremstyle{plain}
\newtheorem{thm}{Theorem}[section]
\newtheorem{cor}[thm]{Corollary}
\newtheorem{lem}[thm]{Lemma}
\newtheorem{prop}[thm]{Proposition}
\newtheorem{var}[thm]{Variant}
\theoremstyle{definition}
\newtheorem{cond}[thm]{Condition}
\newtheorem{rem}[thm]{Remark}
\begin{document}
	\title{Filtrations on the globalization of twisted D-modules over Dedekind schemes}
	\author{Takuma Hayashi\thanks{Department of Pure and Applied Mathematics, Graduate School of Information Science and Technology, Osaka University, 1-5 Yamadaoka, Suita, Osaka 565-0871, Japan, hayashi-t@ist.osaka-u.ac.jp}}
	\date{}
	\maketitle
	\begin{abstract}
		In \cite{hayashijanuszewski}, we established the theory of twisted D-modules over general base schemes. In this short note, we construct a $K$-invariant positive exhaustive filtration on the globalization of the twisted D-module on a smooth quasi-compact $K$-scheme over a Dedekind scheme $S$ obtained by the direct image of a $K$-equivariant twisted integrable connection along a $K$-equivariant closed immersion from a smooth proper $K$-scheme $Y$ with $K$ a smooth $S$-affine group scheme, whose $p$th associated graded $\cO_S$-module is locally free of finite rank for every integer $p$. In particular, the $k$-module of its global sections is projective if $S$ is affine with coordinate ring $k$.
	\end{abstract}

	\section{Introduction}\label{sec:intro}
	In the 2010s, arithmetic structures of Harish-Chandra modules attract attention to some authors for applications to the study of special values of automorphic $L$-functions. In fact, the cohomology of ad\'elic locally symmetric spaces is expected to carry information about special $L$-values. The origin of this idea goes back to \cite{MR0120372}. One may wish that arithmetic structures of the decomposition of the cuspidal cohomology into cohomology of irreducible cuspidal automorphic representations, together with more arguments, for example, the study of the Eisenstein cohomology will lead to rationality or integrality results of special $L$-values. 
	For this reason, one can think of the study of arithmetic structures of Harish-Chandra modules as an Archimedean part of the theory of automorphic representations in this context.

	To get rational models of discrete series representations, M.~Harris proposed in \cite{MR3053412} and \cite{MR4073199} to use the localization. F.~Januszewski introduced the cohomological induction functor over fields of characteristic zero to get rational models of cohomologically induced modules in \cite[Sections 3 and 7]{MR3770183}. Then the author constructed the cohomological induction functor over general commutative rings in \cite{MR3853058} and \cite{MR4007195} to get models over Dedekind domains.
	
	In \cite{MR3770183}, Januszewski also proposed to study the descent problem of the fields of definition of Harish-Chandra modules as a new aspect of representation theory of real reductive Lie groups. In \cite{MR3937337}, it is shown that rationality patterns one finds for the fields of definitions of certain classes of $(\fg,K)$-modules, matches the rationality pattern predicted by Deligne's Conjecture on special values of $L$-functions. A crucial ingredient is the Galois descent of $(\fg,K)$-modules (\cite[Section 7.4]{MR3770183}).
	
	In \cite{hayashijanuszewski}, Januszewski and the author improved Harris' idea to construct smaller arithmetic models of cohomologically induced modules. In fact, cohomologically induced modules are realized as the spaces of global sections of the twisted D-module theoretic direct image of equivariant line bundles on the corresponding closed $K$-orbits to partial flag varieties (see \cite[Theorem 4.3]{MR910203} and \cite[Theorem 5.1, Corollary 5.5]{MR2945222}). Therefore we should get smaller models of cohomologically induced modules from smaller models of their geometric materials via the theory of twisted D-modules. To follow this idea, we developed the theory of twisted D-modules over general base schemes in \cite[Sections 1--4]{hayashijanuszewski}. We also discussed the descent problem of base schemes of certain closed orbits on partial flag schemes in \cite[Section 5]{hayashijanuszewski}. The author studied a cohomological classification of data of Galois descent for the rings of definition of equivariant line bundles on partial flag schemes in \cite{MR4627704}. As a consequence, we achieved the construction of arithmetic models of cohomologically induced modules and the descent of their rings of definition simultaneously. However, it was nontrivial yet that the models over Dedekind domains of dimension one are essentially new. In fact, rational models are already integral models due to the isomorphism $\bQ\otimes_{\bZ}\bQ\cong\bQ$. Here $\bZ$ and $\bQ$ are the ring of integers and the field of rational numbers respectively. In this paper, we distinguish the models of cohomologically induced modules over Dedekind domains of dimension one we constructed in \cite[Section 6]{hayashijanuszewski} from the rational models by proving the following result:
	
	\begin{thm}[Corollary \ref{cor:filt}, Proposition \ref{prop:eq}, Proposition \ref{prop:rep}, Theorem \ref{thm:extendedver}]\label{mainthm}
		Let $K$ be a smooth $S$-affine group scheme over a Dedekind scheme $S$ (i.e., a Noetherian integral regular scheme of dimension $\leq 1$, see \cite[Section (7.13)]{MR4225278}), $i:Y\hookrightarrow X$ be a $K$-equivariant closed immersion of smooth $K$-schemes over $S$, and $\cA$ be a $K$-equivariant tdo on $X$. Write $x:X\to S$ for the structure morphism. Suppose that the following conditions are satisfied:
		\begin{enumerate}
			\renewcommand{\labelenumi}{(\roman{enumi})}
			\item $X$ is quasi-compact over $S$;
			\item $Y$ is proper over $S$.
		\end{enumerate}
		Let $\cV$ be a $K$-equivariant integrable right $i^\cdot\cA$-connection in the sense of \cite[Definition 2.1.3]{hayashijanuszewski}. Then there exists a natural exhaustive $K$-invariant filtration $G_\bullet i_+\cV$ on $i_+\cV$ such that the $p$th associated graded sheaf
		\[\gr^p_G x_\ast i_+\cV\coloneqq x_\ast G_p i_+\cV/x_\ast G_{p-1}i_+\cV\]
		to the induced $K$-invariant filtration $x_\ast G_\bullet i_+\cV$ on $x_\ast i_+\cV$ is a locally free $\cO_S$-module of finite rank for every integer $p$.
	\end{thm}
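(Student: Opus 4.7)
The plan is to construct $G_\bullet i_+\cV$ by lifting the canonical order filtration $F_\bullet\cA$ on the tdo $\cA$. Recall $F_0\cA=\cO_X$, $\gr^\bullet_F\cA\cong\Sym^\bullet T_X$, and the filtration is intrinsic, hence automatically preserved by the $K$-action on $\cA$. Using the description of the direct image in \cite{hayashijanuszewski} via a transfer bimodule for $\cA$ and $i^\cdot\cA$, this order filtration induces a natural $K$-invariant filtration on $i_+\cV$. I would then compute the associated graded using the normal-bundle exact sequence $0\to T_Y\to i^\ast T_X\to N_{Y/X}\to0$ to obtain
\[
\gr^p_G i_+\cV\;\cong\; i_\ast\bigl(\cV\otimes_{\cO_Y}\Sym^p N_{Y/X}\bigr).
\]
Because $\cV$ is an integrable right $i^\cdot\cA$-connection (hence locally free of finite rank as an $\cO_Y$-module) and $N_{Y/X}$ is locally free (the embedding being regular since both $X$ and $Y$ are smooth over $S$), each $\gr^p_G i_+\cV$ is a coherent locally free $\cO_Y$-module of finite rank, supported on $Y$.

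Next, I would push down along $x$. Since $\gr^p_G i_+\cV$ is locally free on $Y$ and $Y\to S$ is smooth (hence flat), $\gr^p_G i_+\cV$ is flat over $\cO_S$. Properness of $Y\to S$ then gives that $x_\ast\gr^p_G i_+\cV$ is coherent on $S$. For a coherent sheaf on a proper $S$-scheme that is flat over a Dedekind base, the pushforward is torsion-free: a torsion section at a closed point $s$ would be annihilated by a uniformizer $\pi\in\cO_{S,s}$, contradicting that $\pi$ is a nonzerodivisor on a flat module. Applying $x_\ast$ (left exact) to $0\to G_{p-1}i_+\cV\to G_p i_+\cV\to\gr^p_G i_+\cV\to0$ yields an injection
\[
\gr^p_G x_\ast i_+\cV\;=\;x_\ast G_p i_+\cV/x_\ast G_{p-1}i_+\cV\;\hookrightarrow\;x_\ast\gr^p_G i_+\cV,
\]
so $\gr^p_G x_\ast i_+\cV$ is a coherent subsheaf of a torsion-free coherent sheaf on Dedekind $S$, hence itself torsion-free coherent, hence locally free of finite rank. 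Exhaustion of $x_\ast G_\bullet i_+\cV$ follows from the exhaustion of $G_\bullet i_+\cV$, since $x_\ast$ commutes with filtered colimits on sheaves supported on the qcqs scheme $Y$.

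The main obstacle I anticipate is the equivariant twisted bookkeeping, rather than the algebraic core: one must produce the filtration $G_\bullet i_+\cV$ as a $K$-equivariant object of the category of twisted D-modules defined in \cite[Sections 2--4]{hayashijanuszewski}, and verify that the identification $\gr^p_G i_+\cV\cong i_\ast(\cV\otimes_{\cO_Y}\Sym^p N_{Y/X})$ is compatible with the $K$-equivariant structures on $\cA$, $i^\cdot\cA$, $\cV$, and $N_{Y/X}$. This is the classical D-module computation, but lifting it to the $K$-equivariant twisted setting over a general Dedekind base presumably accounts for the decomposition of the theorem into Corollary \ref{cor:filt}, Propositions \ref{prop:eq} and \ref{prop:rep}, and Theorem \ref{thm:extendedver}. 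Once the equivariant identification of the associated graded is in hand, the flatness/torsion-freeness argument over the Dedekind base above finishes the proof.
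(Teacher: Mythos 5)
Your proposal follows essentially the same route as the paper: filter $i_+\cV$ by the left $i^\cdot\cA$-submodules generated by the order filtration through the transfer bimodule $\cA_{Y\to X}$, identify $\gr^p_G i_+\cV$ with $i_\ast(\cV\otimes_{\cO_Y}\Sym^p_{\cO_Y}\cN_{X/Y})$, and deduce that $\gr^p_G x_\ast i_+\cV$ is a torsion-free coherent subsheaf of the pushforward of a flat locally free sheaf along the proper map $y$, hence locally free over the Dedekind base. The only step you elide is the quasi-coherence of $x_\ast G_p i_+\cV$ (needed before ``subsheaf of a coherent sheaf'' yields coherence, since a sub-$\cO_S$-module of a quasi-coherent sheaf need not be quasi-coherent), which is exactly where hypothesis (i), that $x$ is quasi-compact (and quasi-separated), enters the paper's argument via Proposition \ref{prop:rep}.
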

	
	Our strategy for the proof is as follows: For each integer $p$, let $G_p \cA_{Y\to X}$ be the left $i^\cdot\cA$-module generated by $i^\ast F_p\cA$, where $F_p\cA$ is the $p$th filter of the given filtration on $\cA$. Define filtrations
	\[\begin{array}{c}
		G_\bullet (\cV\otimes_{i^\cdot\cA}\cA_{Y\to X}),\\
		G_\bullet i_+\cV,\\
		G_\bullet x_\ast i_+\cV
	\end{array}\]
	on $\cV\otimes_{i^\cdot\cA}\cA_{Y\to X}$, $i_+\cV$, $x_\ast i_+\cV$ by
	\[\begin{array}{c}
		G_\bullet (\cV\otimes_{i^\cdot\cA}\cA_{Y\to X})= \cV\otimes_{i^\cdot\cA} G_\bullet\cA_{Y\to X}\\
		G_\bullet i_+\cV= i_\ast G_\bullet(\cV\otimes_{i^\cdot\cA}\cA_{Y\to X})\\
		G_\bullet x_\ast i_+\cV= x_\ast G_\bullet i_+\cV
	\end{array}\]
	respectively. One can prove that $G_\bullet i_+\cV$ is $K$-invariant by putting the $K$-equivariant structure on $G_\bullet i_+\cV$ in a similar way to \cite[Theorem 3.10.2]{hayashijanuszewski}. The filtration $G_\bullet i_+\cV$ on $i_+\cV$ is exhaustive since $i$ is quasi-compact. Since $X$ is quasi-compact and quasi-separated over $S$, $G_\bullet x_\ast i_+\cV$ is an exhaustive quasi-coherent $K$-invariant filtration on the representation $x_\ast i_+\cV$ of $K$ (cf.\ \cite[Theorems 3.10.2 and 3.11.1]{hayashijanuszewski}). The key idea for the algebraic properties of $G_\bullet x_\ast i_+\cV$ is to prove an isomorphism
	\[\gr^p_G(\cV\otimes_{i^\cdot\cA}\cA_{Y\to X})\cong\cV\otimes_{\cO_Y}\Sym^p_{\cO_Y} i^\ast\Theta_X/\Theta_{Y/S}\]
	for each integer $p$ as \cite[Lemma 3.3]{MR3372316} and \cite[the proof of 7.7 Proposition]{MR1082342}. One can prove by comparing $\gr^p_G x_\ast i_+\cV$ and $y_\ast \gr^p_G (\cV\otimes_{i^\cdot\cA}\cA_{Y\to X})$ that $\gr^p_G x_\ast i_+\cV$ is a torsion-free and coherent $\cO_S$-module, where $y$ is the structure morphism $Y\to S$. The assertion now follows since $S$ is Dedekind.

	One can also prove the following result along the same lines:
	
	\begin{var}[Corollary \ref{cor:filt}, Theorem \ref{thm:extendedver}]\label{var:mainthm}
		Let $i:Y\hookrightarrow X$ be a closed immersion of smooth schemes over a Noetherian integral domain $k$ which is Dedekind in the sense of \cite[Definition/Proposition B.87]{MR4225278}, and $\cA$ be a tdo on $X$. Suppose that $Y$ is proper over $k$.
		Let $\cV$ be a integrable right $i^\cdot\cA$-connection on $Y$. Then there exists a natural exhaustive $k$-linear filtration $G_\bullet \Gamma(X,i_+\cV)$ on $\Gamma(X,i_+\cV)$ such that the $p$th associated graded $k$-module $G_p \Gamma(X,i_+\cV)/G_{p-1} \Gamma(X,i_+\cV)$ is finitely generated and projective for every integer $p$.
	\end{var}
	
	We only see the global sections this time. Hence we need not to appeal to the quasi-coherence of $G_\bullet x_\ast i_+\cV$. This is why we do not assume $x$ to be quasi-compact. The exhaustivity follows since $y$ is quasi-compact. Besides these points, the assertion is proved along the same line as Theorem \ref{mainthm}.
	
	A standard argument on the Mittag-Leffler condition (Lemma \ref{lem:mittag-leffler}) implies that our models are essentially new also in the following sense:
	
	\begin{cor}\label{cor:proj}
		Let $i:Y\hookrightarrow X$ be a closed immersion of smooth schemes over a Dedekind domain $k$, and $\cA$ be a tdo on $X$. Suppose that $Y$ is proper over $k$. Let $\cV$ be a integrable right $i^\cdot\cA$-connection on $Y$. Then $\Gamma(X, i_+\cV)$ is projective as a $k$-module.
	\end{cor}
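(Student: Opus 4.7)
The plan is to combine Variant~\ref{var:mainthm} with the standard Mittag-Leffler argument alluded to in the introduction (Lemma~\ref{lem:mittag-leffler}). First I would invoke Variant~\ref{var:mainthm} to obtain an exhaustive $k$-linear filtration $G_\bullet \Gamma(X, i_+\cV)$ whose $p$-th associated graded $k$-module $G_p\Gamma(X, i_+\cV)/G_{p-1}\Gamma(X, i_+\cV)$ is finitely generated and projective for every integer $p$. Since the filtration arises from the construction sketched in the introduction, where the $p$-th graded piece is isomorphic (on the sheaf level) to a twist of $\Sym^p$ of a locally free $\cO_Y$-module, it vanishes for $p < 0$, and I may therefore assume $G_p\Gamma(X, i_+\cV) = 0$ for $p < 0$, so that the filtration is also bounded below.

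Next, I would argue by induction on $p \geq 0$ that each $G_p\Gamma(X, i_+\cV)$ is itself finitely generated and projective, and that the inclusion $G_{p-1}\Gamma(X, i_+\cV) \hookrightarrow G_p\Gamma(X, i_+\cV)$ admits a $k$-linear retraction. Both claims follow at each step from the fact that the short exact sequence
\[ 0 \to G_{p-1}\Gamma(X, i_+\cV) \to G_p\Gamma(X, i_+\cV) \to G_p\Gamma(X, i_+\cV)/G_{p-1}\Gamma(X, i_+\cV) \to 0 \]
splits since its quotient is projective, together with the inductive hypothesis applied to $G_{p-1}\Gamma(X, i_+\cV)$.

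The main step is to deduce projectivity of $M \coloneqq \Gamma(X, i_+\cV) = \varinjlim_p G_p\Gamma(X, i_+\cV)$, where the equality uses exhaustivity. I would show directly that $\Hom_k(M, -)$ is exact. Given any short exact sequence $0 \to N' \to N \to N'' \to 0$ of $k$-modules, projectivity of each $G_p\Gamma(X, i_+\cV)$ yields a short exact sequence of inverse systems
\[ 0 \to \Hom_k(G_p\Gamma(X, i_+\cV), N') \to \Hom_k(G_p\Gamma(X, i_+\cV), N) \to \Hom_k(G_p\Gamma(X, i_+\cV), N'') \to 0. \]
Since every inclusion $G_{p-1}\Gamma(X, i_+\cV) \hookrightarrow G_p\Gamma(X, i_+\cV)$ is split, the transition maps in these inverse systems are surjective, so each system satisfies the Mittag-Leffler condition. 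By Lemma~\ref{lem:mittag-leffler}, their $\varprojlim^1$ vanishes; combined with the identification $\Hom_k(M, -) = \varprojlim_p \Hom_k(G_p\Gamma(X, i_+\cV), -)$, this produces a short exact sequence upon applying $\Hom_k(M, -)$, so $M$ is projective.

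I do not anticipate any significant obstacle. Variant~\ref{var:mainthm} supplies all the geometric content, and what remains is a formal $\varprojlim^1$-vanishing argument. The one minor point of care is verifying that the filtration can be taken to be bounded below, for which the symmetric-power description of the graded pieces in the construction is needed, but only so that the induction has a clean starting point.
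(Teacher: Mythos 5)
Your proposal is correct and follows essentially the same route as the paper: the paper likewise deduces the corollary from the exhaustive positive filtration with finitely generated projective graded pieces (Variant~\ref{var:mainthm}, i.e.\ Corollary~\ref{cor:filt} and Theorem~\ref{thm:extendedver}) combined with Lemma~\ref{lem:mittag-leffler}, whose proof the paper omits and which your splitting/Mittag--Leffler argument correctly supplies. One cosmetic slip: the $\varprojlim^1$-vanishing for Mittag--Leffler inverse systems is not the content of Lemma~\ref{lem:mittag-leffler} (that lemma is precisely the statement your argument establishes), so at that step you should invoke the standard fact about surjective inverse systems rather than the lemma itself.
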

	
	Finally, we remark that one can prove all the results in this paper for left $i^\cdot\cA$-modules $\cU$ along the same lines by replacing
	$\cA_{Y\to X}$ with $\cA_{X\gets Y}$ in \cite[Section 3.8.1]{hayashijanuszewski}.

	\renewcommand{\abstractname}{Acknowledgments}
	\begin{abstract}
		The author thanks Yoshiki Oshima for helpful comments.
		
		This work was supported by JSPS KAKENHI Grant Numbers JP21J00023 and JP22KJ2045.
	\end{abstract}
	
	\section{Notation}
	
	All the filtrations in this paper are increasing and positive. That is, any filtration $F_\bullet\cM$ on an abelian group or a sheaf of abelian groups satisfies $F_p\cM\subset F_{p+1}\cM$ for every integer $p$ and $F_{-1}\cM=0$. For an object $\cM$, equipped with a filtration $F_\bullet \cM$, we denote the associated graded object by
	\[\begin{array}{cc}
		\gr_F \cM=\oplus_p \gr^p_F \cM,
		&\gr^p_F\cM=F_p\cM/F_{p-1}\cM,
	\end{array}\]
	where $p$ runs through all integers.
	
	Let $(X,\cO_X)$ be a ringed space. We refer to the unbounded derived category of $\cO_X$-modules as $D(\cO_X)$. For an integer $d$, let $D^{\leq d}(\cO_X)\subset D(\cO_X)$ denote the full subcategory consisting of complexes $\cF^\bullet$ satisfying $H^n(\cF^{\bullet})=0$ for $n>d$. Let $\cF$ be an $\cO_X$-module. For an open subset $U\subset X$, let $\Gamma(U,\cF)$ denote the abelian group of $U$-sections of $\cF$. For an integer $p$, we refer to the $p$th symmetric power of $\cF$ as $\Sym^p_{\cO_X}\cF$. Set $\Sym_{\cO_X}\cF=\oplus_p\Sym^p_{\cO_X}\cF$ as the total symmetric power of $\cF$. 
	
	For a scheme $X$, we write $\cO_X$ for its structure sheaf. 
	
	Let $S$ be a scheme, and $X$ be a smooth $S$-scheme. We denote the structure morphism $X\to S$ by the corresponding small letter $x$. Let $\Theta_{X/S}$ be the $\cO_X$-module of $S$-vector fields on $X$. Write $\omega_{X/S}$ and $\omega^\vee_{X/S}$ for the canonical bundle on $X$ and its dual.
	
	In the rest of this paper, let $S$ be a scheme, and $i:Y\hookrightarrow X$ be an immersion of smooth schemes over $S$ unless specified otherwise. Let $i_\ast$ and $i^{-1}$ be the sheaf theoretic direct image and pullback respectively. For a left $\cO_X$-module $\cF$, set $i^\ast\cF=\cO_Y\otimes_{i^{-1}\cO_X} i^{-1}\cF$. Set $\cN_{X/Y}=i^\ast\Theta_{X/S}/\Theta_{Y/S}$ as the normal sheaf.
	
	For the basic notations on tdos and twisted D-modules over schemes, we follow \cite{hayashijanuszewski}. Let $\cA$ be a tdo on $X$ (\cite[Definition 1.1.4]{hayashijanuszewski}). Write $F_\bullet\cA$ for the given filtration on $\cA$. Recall that we defined the pullback $i^\cdot\cA$ in \cite[Section 1.3.5]{hayashijanuszewski}. Let $\cA_{Y\to X}$ denote the $(i^\cdot\cA,i^{-1}\cA)$-bimodule in \cite[Lemma 3.6.5]{hayashijanuszewski}. We will fix a right $i^\cdot\cA$-module $\cV$. Write $i_+\cV=i_\ast(\cV\otimes_{i^\cdot\cA}\cA_{Y\to X})$ in this paper (even if $i$ is not a closed immersion).

	\section{Filter $G$ on $\cA_{Y\to X}$}\label{sec:filt}
	
	The aim of this section is to study the filtration on $\cA_{Y\to X}$ explained in Section \ref{sec:intro}.
	
	Regard $\cA$ as an $\cO_X$-bimodule for the multiplication from the left and right sides. Then for each integer $p$, $F_p\cA$ is a sub-$\cO_X$-bimodule of $\cA$ by definition. Since $F_\bullet\cA$ is an exhaustive filtration on $\cA$, we obtain an initial (colimit) cocone
	\[\begin{tikzcd}
		\cdots\ar[r, hook]&F_{p-1}\cA\ar[r, hook]\ar[rd, hook]
		&F_p\cA\ar[d, hook]\ar[r, hook]
		&F_{p+1}\cA\ar[r, hook]\ar[ld, hook]
		&\cdots\\
		&&\cA.
	\end{tikzcd}\]
	Apply $i^\ast$ to get an initial cocone of $(\cO_Y,i^{-1}\cO_X)$-bimodules
	\[\begin{tikzcd}
		\cdots\ar[r, hook]&i^\ast F_{p-1}\cA\ar[r, hook]\ar[rd, hook]
		&i^\ast F_p\cA\ar[d, hook]\ar[r, hook]
		&i^\ast F_{p+1}\cA\ar[r, hook]\ar[ld, hook]
		&\cdots\\
		&&\cA_{Y\to X}.
	\end{tikzcd}\]
	The horizontal arrows are monic since $\gr^p_F\cA$ are locally free of finite rank as left $\cO_X$-modules for all integers $p$. Therefore $F_\bullet\cA_{Y\to X}\coloneqq i^\ast F_\bullet\cA$ exhibits an exhaustive filtration on $\cA_{Y\to X}$ as an $(\cO_Y,i^{-1}\cO_X)$-bimodule. Since $i^\ast$ is right exact, we also have a canonical $(\cO_Y,i^{-1}\cO_X)$-bilinear isomorphism $i^\ast \gr^p_F\cA\cong \gr^p_F \cA_{Y\to X}$ for each integer $p$. Write $\phi$ for the total isomorphism $i^\ast \gr_F\cA\cong \gr_F \cA_{Y\to X}$.
	
	Let $G_p\cA_{Y\to X}$ be the left $i^\cdot\cA$-submodule of $\cA_{Y\to X}$ generated by $F_p\cA_{Y\to X}$. Since $F_p\cA$ is a right $\cO_X$-submodule of $\cA$, $G_p\cA_{Y\to X}$ is a right $i^{-1}\cO_X$-submodule of $\cA_{Y\to X}$.
	
	\begin{thm}\label{thm:gr_GAYX}
		There is a canonical graded $(i^\cdot\cA,i^{-1}\cO_X)$-bilinear isomorphism
		\[i^\cdot\cA\otimes_{\cO_Y} \Sym_{\cO_Y} \cN_{X/Y}\cong \gr_G \cA_{Y\to X},\]
		where $\Sym_{\cO_Y} \cN_{X/Y}$ is regarded as a right $i^{-1}\cO_X$-module for the multiplication through the structure homomorphism $i^{-1}\cO_X\to\cO_Y$.
	\end{thm}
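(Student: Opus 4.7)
My plan is to construct a natural left $i^\cdot\cA$-linear symbol map, verify that it factors through the claimed quotient, and then check bijectivity locally.

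First, I would set up the map. Using the isomorphism $\phi$ already available, I identify $\gr^p_F\cA_{Y\to X}$ with $\Sym^p_{\cO_Y}(i^\ast\Theta_{X/S})$ (via the tdo axiom $\gr_F\cA\cong\Sym_{\cO_X}\Theta_{X/S}$). The inclusions $F_{p-1}\cA_{Y\to X}\subset G_{p-1}\cA_{Y\to X}$ and $F_p\cA_{Y\to X}\subset G_p\cA_{Y\to X}$ induce on the graded quotients an $(\cO_Y,i^{-1}\cO_X)$-bilinear map $\alpha_p:\Sym^p_{\cO_Y}(i^\ast\Theta_{X/S})\to\gr^p_G\cA_{Y\to X}$. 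Since the left $i^\cdot\cA$-action on $\cA_{Y\to X}$ preserves each $G_q$ by the very definition of $G$, each $\gr^p_G\cA_{Y\to X}$ is a left $i^\cdot\cA$-module, and $\alpha_p$ extends uniquely to an $(i^\cdot\cA,i^{-1}\cO_X)$-bilinear map
\[\tilde\psi_p:i^\cdot\cA\otimes_{\cO_Y}\Sym^p_{\cO_Y}(i^\ast\Theta_{X/S})\to\gr^p_G\cA_{Y\to X}.\]

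Next I would verify that $\tilde\psi_p$ factors through $i^\cdot\cA\otimes_{\cO_Y}\Sym^p_{\cO_Y}\cN_{X/Y}$; equivalently, $\alpha_p(\theta\cdot s)=0$ for $\theta\in\Theta_{Y/S}$ and $s\in\Sym^{p-1}(i^\ast\Theta_{X/S})$. This is a local question on $Y$. Locally I choose an $\cO_Y$-lift $\tilde\theta\in F_1 i^\cdot\cA$ of $\theta\in\Theta_{Y/S}\cong\gr^1_F i^\cdot\cA$ (available since $\gr_F i^\cdot\cA$ is locally free) and a lift $\tilde s\in F_{p-1}\cA_{Y\to X}$ of $s$. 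The left $i^\cdot\cA$-action on $\cA_{Y\to X}$ respects the $F$-filtrations and, on associated gradeds, recovers multiplication in $\Sym_{\cO_Y}(i^\ast\Theta_{X/S})$ along the natural map $\Theta_{Y/S}\to i^\ast\Theta_{X/S}$. Hence $\tilde\theta\cdot\tilde s$ lies in $F_p\cA_{Y\to X}$ with $F$-symbol equal to $\theta\cdot s$; simultaneously $\tilde\theta\cdot\tilde s\in i^\cdot\cA\cdot F_{p-1}\cA_{Y\to X}\subset G_{p-1}\cA_{Y\to X}$, so its class in $\gr^p_G\cA_{Y\to X}$ vanishes, as required. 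Let $\psi$ denote the resulting factored map summed over $p$.

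Finally, $\psi$ is surjective because $G_p\cA_{Y\to X}$ is generated as a left $i^\cdot\cA$-module by $F_p\cA_{Y\to X}$. For injectivity, I would argue locally on $Y$: shrinking to an affine open $U\subset Y$ contained in an affine open $V\subset X$ with local coordinates $x_1,\ldots,x_n$ satisfying $Y\cap V=V(x_1,\ldots,x_r)$, and using a local $\cO_V$-splitting of $F_\bullet\cA|_V$ (which exists since $\gr^p_F\cA$ is locally free), I obtain an $\cO_U$-basis of $F_p\cA_{Y\to X}|_U$ by the pulled-back monomials $i^\ast\partial_x^\alpha$ with $|\alpha|\leq p$. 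An explicit calculation of the left $i^\cdot\cA$-action, in which a lift of $\partial_{y_j}$ acts as multiplication by $\partial_{x_{r+j}}$ up to lower-order terms, produces matching $\cO_U$-bases of $\gr^p_G\cA_{Y\to X}|_U$ and $i^\cdot\cA\otimes_{\cO_Y}\Sym^p_{\cO_Y}\cN_{X/Y}|_U$ under $\psi$. The main obstacle is this local computation, whose delicate point is the control of the tdo-twist in the multiplication of $\cA$; it extends the untwisted calculations in \cite[Lemma 3.3]{MR3372316} and \cite[proof of Proposition 7.7]{MR1082342} to twisted D-modules.
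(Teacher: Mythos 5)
Your proposal is correct and follows essentially the same route as the paper: define the symbol map via the tdo isomorphism and the identification $i^\ast\gr_F\cA\cong\gr_F\cA_{Y\to X}$, kill the tangential directions by observing that a lift of a product with a factor from $\Theta_{Y/S}$ lands in $i^\cdot\cA\cdot F_{p-1}\cA_{Y\to X}\subset G_{p-1}\cA_{Y\to X}$, and verify bijectivity in local coordinates by matching free $i^\cdot\cA$-bases (the paper delegates the last step to the freeness statement in the proof of \cite[Lemma 3.8.10]{hayashijanuszewski}, which gives exactly the basis $1\otimes e_{r+1}^{l_{r+1}}\cdots e_n^{l_n}$ you describe).
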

	
	\begin{proof}
		Recall that we are given an $\cO_X$-algebra isomorphism \[\sigma:\Sym_{\cO_X}\Theta_{X/S}\cong \gr_F\cA\]
		by definition of tdos. Compose $i^\ast\sigma$ with the canonical isomorphism
		\[\Sym_{\cO_Y} i^\ast\Theta_{X/S}\cong i^\ast\Sym_{\cO_X}\Theta_{X/S}\]
		to get
		\[\Sym_{\cO_Y} i^\ast\Theta_{X/S}\cong\Sym_{\cO_Y} i^\ast\Theta_{X/S}\overset{i^\ast\sigma}{\cong}
		i^\ast\gr_F\cA,\]
		which we will denote by $\psi$. We regard $\psi$ as an $(\cO_Y,i^{-1}\cO_X)$-bilinear isomorphism. Here each vertex in the above sequence is a right $i^{-1}\cO_X$-module for the structure homomorphism $i^{-1}\cO_X\to \cO_Y$.
		
		Since $F_p\cA_{Y\to X}\subset G_p\cA_{Y\to X}$ for every $p$, we have a canonical $(\cO_Y,i^{-1}\cO_X)$-bilinear homomorphism $\pi:\gr_F\cA_{Y\to X}\to \gr_G\cA_{Y\to X}$. Let $\tau$ denote the composite map
		\[\Sym_{\cO_Y} i^\ast\Theta_{X/S}\overset{\psi}{\cong} i^\ast\gr_F\cA\overset{\phi}{\cong} \gr_F \cA_{Y\to X}\overset{\pi}{\to} \gr^p_G\cA_{Y\to X}.\]
		We wish to prove that $\tau$ descends to an $(\cO_Y,i^{-1}\cO_X)$-module homomorphism $\Sym_{\cO_Y}\cN_{X/Y}\to \gr^p_G\cA_{Y\to X}$. This problem is local in $X$ and $Y$ since the desired homomorphism $\Sym_{\cO_Y}\cN_{X/Y}\to \gr^p_G\cA_{Y\to X}$ exists at most uniquely. Therefore we may take $(x_i)$, $(\partial_i)$, and $r\leq n$ in \cite[Theorem A.2.20]{hayashijanuszewski}. Then one can identify $\cN_{X/Y}$ with $\oplus_{r+1\leq j\leq n} \cO_Y \partial_j\subset i^\ast\Theta_{X/S}$ as an $\cO_Y$-module. The proof is completed by showing that the composite map \[\cO_Y\left[\partial_{r+1},\partial_{r+2},\ldots,\partial_n\right]
		\subset \Sym_{\cO_Y} i^\ast\Theta_{X/S}\overset{\tau}{\to} \gr^p_G\cA_{Y\to X}\]
		exhibits the desired homomorphism. For this, it will suffice to show \[\sigma(a\partial^{l_1}_1\partial^{l_2}_2\cdots \partial^{l_n}_n)=0\]
		for a local section $a\in\cO_Y$ and nonnegative integers $l_1,l_2,\ldots,l_n$ with $\sum_{j=1}^{r} l_j>0$. Choose a lift $e_j\in F_1\cA$ of $\partial_j$ for each $1\leq j\leq n$. Then $\sigma(a\partial^{l_1}_1\partial^{l_2}_2\cdots \partial^{l_n}_n)$ lifts to $a\otimes e^{l_1}_1e^{l_2}_2\cdots el^{l_n}_n
		\in G_{\sum_{j=r+1}^n l_j} \cA_{Y\to X} \subset G_{\sum_{j=1}^n l_j-1}\cA_{Y\to X}$. This shows $\sigma(a\partial^{l_1}_1\partial^{l_2}_2\cdots \partial^{l_n}_n)=0$
		in $\gr^{\sum_{j=1}^n l_j}_G \cA_{Y\to X}$.
		
		Finally, we show the scalar extension $\tilde{\tau}:i^\cdot\cA\otimes_{\cO_Y}\Sym_{\cO_Y}\cN_{X/Y}\to \gr_G\cA_{Y\to X}$ is an isomorphism. We may again choose $(x_i)$, $(\partial_i)$, and $r\leq n$ in \cite[Theorem A.2.20]{hayashijanuszewski} and the lift $(e_j)$ as above. Then
		\[1\otimes \partial^{l_{r+1}}_{r+1} \partial^{l_{r+2}}_{r+2}
		\cdots \partial^{l_n}_n\]
		with $l_{r+1},l_{r+2},\ldots,l_{n}\geq 0$ form a free basis of $i^\cdot\cA\otimes_{\cO_Y}\Sym_{\cO_Y}\cN_{X/Y}$ as a left $i^\cdot\cA$-module. The homomorphism $\tilde{\tau}$ sends them to
		$1\otimes e^{l_{r+1}}_{r+1} e^{l_{r+2}}_{r+2}\cdots e^{l_n}_n$, which form a free basis of $\gr_G \cA_{Y\to X}$ by \cite[Proof of Lemma 3.8.10]{hayashijanuszewski}. This completes the proof.
	\end{proof}
	
	Set
	\[\begin{array}{c}
		G_\bullet (\cV\otimes_{i^\cdot\cA} \cA_{Y\to X})
		\coloneqq\cV\otimes_{i^\cdot\cA} G_\bullet\cA_{Y\to X}\\
		G_\bullet i_+\cV \coloneqq
		i_\ast(\cV\otimes_{i^\cdot\cA} G_\bullet\cA_{Y\to X})\\
		G_\bullet x_\ast i_+\cV \coloneqq 
		x_\ast G_\bullet i_+\cV
		=y_\ast G_\bullet (\cV\otimes_{i^\cdot\cA} \cA_{Y\to X}).
	\end{array}\]
	
	\begin{cor}\label{cor:filt}
		\begin{enumerate}
			\renewcommand{\labelenumi}{(\arabic{enumi})}
			\item The $i^{-1}\cO_X$-modules $G_\bullet (\cV\otimes_{i^\cdot\cA} \cA_{Y\to X})$ form an exhaustive filtration on $\cV\otimes_{i^\cdot\cA} \cA_{Y\to X}$ with
			\[\gr_G(\cV\otimes_{i^\cdot\cA} \cA_{Y\to X})\cong \cV\otimes_{\cO_Y}\Sym_{\cO_Y}\cN_{X/Y}.\]
			\item The $\cO_X$-modules $G_\bullet i_+\cV$ form a filtration on $i_+\cV$. Moreover, it is exhaustive if $i$ is quasi-compact.
			\item The $\cO_S$-modules $G_\bullet x_\ast i_+\cV$ form a filtration on $x_\ast i_+\cV$. Moreover, it is exhaustive if $y$ is quasi-compact.
		\end{enumerate}	
	\end{cor}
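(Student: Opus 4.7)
The plan is to deduce all three parts essentially formally from Theorem~\ref{thm:gr_GAYX}, which identifies $\gr^p_G\cA_{Y\to X}$ with $i^\cdot\cA\otimes_{\cO_Y}\Sym^p_{\cO_Y}\cN_{X/Y}$.

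For~(1), I would first show that $\gr^p_G\cA_{Y\to X}$ is flat as a left $i^\cdot\cA$-module. Since $i$ is an immersion of smooth $S$-schemes, $\cN_{X/Y}$ is locally free of finite rank on $Y$, hence so is $\Sym^p_{\cO_Y}\cN_{X/Y}$, and Theorem~\ref{thm:gr_GAYX} then locally presents $\gr^p_G\cA_{Y\to X}$ as a finite free left $i^\cdot\cA$-module. Applying $\cV\otimes_{i^\cdot\cA}(-)$ to the short exact sequence
\[0\to G_{p-1}\cA_{Y\to X}\to G_p\cA_{Y\to X}\to \gr^p_G\cA_{Y\to X}\to 0\]
therefore stays exact on the left, which both gives injectivity of the transition maps $\cV\otimes_{i^\cdot\cA}G_{p-1}\cA_{Y\to X}\hookrightarrow \cV\otimes_{i^\cdot\cA}G_p\cA_{Y\to X}$ (so the $G_\bullet(\cV\otimes_{i^\cdot\cA}\cA_{Y\to X})$ do assemble into a filtration of sub-$i^{-1}\cO_X$-modules) and, via right exactness and associativity of tensor products, the identification
\[\gr^p_G(\cV\otimes_{i^\cdot\cA}\cA_{Y\to X})\cong\cV\otimes_{i^\cdot\cA}\gr^p_G\cA_{Y\to X}\cong\cV\otimes_{\cO_Y}\Sym^p_{\cO_Y}\cN_{X/Y}.\]
Exhaustivity is then immediate from $\cA_{Y\to X}=\varinjlim_p G_p\cA_{Y\to X}$ together with commutation of tensor product with filtered colimits, the injectivity above ensuring that this colimit is read as a union.

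For parts~(2) and~(3), the strategy is to apply the left exact functors $i_\ast$ and $y_\ast$ (using $x_\ast i_\ast=y_\ast$) to the filtration of~(1). Left exactness turns monomorphisms into monomorphisms, so the $G_p i_+\cV$ and $G_p x_\ast i_+\cV$ sit as subobjects of $i_+\cV$ and $x_\ast i_+\cV$ respectively, yielding the claimed filtrations. The only nontrivial point is exhaustivity, which amounts to asking that $i_\ast$ (respectively $y_\ast$) commute with the filtered colimit $\varinjlim_p G_p(\cV\otimes_{i^\cdot\cA}\cA_{Y\to X})$. Under the stated quasi-compactness of $i$ (respectively $y$) this follows from the standard fact that direct image along a quasi-compact, quasi-separated morphism of schemes commutes with filtered colimits of $\cO$-modules, which I would verify by testing on a basis of quasi-compact opens of the target and using that their preimages in $Y$ are then quasi-compact.

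The main step to watch is the flatness of $\gr^p_G\cA_{Y\to X}$ as a left $i^\cdot\cA$-module in~(1), since $\cV$ itself carries no flatness hypothesis; after Theorem~\ref{thm:gr_GAYX} this reduces cleanly to local freeness of the symmetric powers of $\cN_{X/Y}$. Everything else — the identification of the graded piece, exhaustivity on $Y$, and passage to $X$ and $S$ — is a formal consequence of right/left exactness combined with commutation of pushforward along quasi-compact quasi-separated morphisms with filtered colimits.
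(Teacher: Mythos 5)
Your proposal is correct and follows essentially the same route as the paper: part (1) via flatness of $\gr^p_G\cA_{Y\to X}$ as a left $i^\cdot\cA$-module deduced from Theorem~\ref{thm:gr_GAYX} (mirroring the argument at the start of Section~\ref{sec:filt} for $F_\bullet\cA_{Y\to X}$), and parts (2)--(3) via left exactness of $i_\ast$ and $y_\ast$ together with commutation of pushforward along quasi-compact morphisms with direct limits having monic transition maps. The only cosmetic difference is that you invoke quasi-separatedness for the colimit-commutation step, which is not needed here precisely because the transition maps are monic, so quasi-compactness alone suffices --- as your own verification on a basis of quasi-compact opens in fact shows.
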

	
	\begin{proof}
		Part (1) is an easy consequence of Theorem \ref{thm:gr_GAYX} (cf.\ the argument at the beginning of this section). The first part of (2) (resp.\ (3)) follows since $i_\ast$ (resp.\ $y_\ast$) respects monomorphisms. The latter parts of (2) and (3) follow from the general fact that the direct image functor of sheaves along a quasi-compact morphism of schemes respects direct limits of sheaves with monic transition maps. This completes the proof.
	\end{proof}

	\begin{rem}\label{rem:directimageforleftmod}
		If you wish to work with a left $i^\cdot\cA$-module $\cU$, you tentatively define $i_+\cU=i_\ast(\cA_{X\gets Y}\otimes_{i^\cdot\cA}\cU)$. One can identify it with
		\[i_\ast((\cU\otimes_{\cO_Y}\omega_{Y/S})\otimes_{i^\cdot\cB} \cB_{Y\to X})\otimes_{\cO_X}\omega^\vee_{X/S}\]
		without any finiteness conditions by the projection formula for ringed spaces, where $\cB\coloneqq \omega^{\vee}_{X/S}\otimes_{\cO_X}\cA^{\op}\otimes_{\cO_X}\omega_{X/S}$. The sheaf $\cA^{\op}$ is the opposite tdo (\cite[Example 1.1.7]{hayashijanuszewski}).
	\end{rem}
	
	\section{Equivariant structure}
	
	In this section, we note equivariant structures on the filtrations. Let $K$ be a smooth $S$-affine group scheme over $S$, $i:Y\to X$ be a $K$-equivariant immersion of smooth schemes over $S$, and $\cA$ be a $K$-equivariant tdo on $X$. Moreover, assume:
	
	\begin{cond}\label{cond:finitedimension}
		\begin{enumerate}
			\renewcommand{\labelenumi}{(\roman{enumi})}
			\item $i$ is affine;
			\item $\bR i_\ast:D(i^{-1}\cO_X)\to D(\cO_X)$ is locally bounded in the sense of \cite[Definition A.6.6]{hayashijanuszewski}.
		\end{enumerate}
	\end{cond}
	
	Let $\cV$ be a $K$-equivariant right $i^\cdot\cA$-module. One can prove the following result along the same line as \cite[Claim 3.9.8 (1)]{hayashijanuszewski}:
	
	\begin{prop}\label{prop:xbc}
		Suppose that we are given a Cartesian diagram of smooth $S$-schemes
		\[\begin{tikzcd}
			\tilde{Y}\ar[r, "\tilde{p}"]\ar[d, "\tilde{i}"']
			&Y\ar[d, "i"]\\
			\tilde{X}\ar[r, "p"]&X.
		\end{tikzcd}\]
		Assume that the following conditions are satisfied:
		\begin{enumerate}
			\renewcommand{\labelenumi}{(\roman{enumi})}
			\item $p$ is isomorphic in the category of morphisms of $S$-schemes to a projection morphism in the category of smooth $S$-schemes.
			\item The functors
			\[\bR i_\ast:D(i^{-1}\cO_X)\to D(\cO_X)\]
			\[\bR \tilde{i}_\ast:D(\tilde{i}^{-1}\cO_{\tilde{X}})\to D(\cO_{\tilde{X}})\]
			are locally bounded.
		\end{enumerate}
		Let $p$ be an integer, and $\cV$ be a quasi-coherent right $i^\cdot\cA$-module. Then there is a natural isomorphism
		\[p^\ast G_pi_+\cV\cong G_p\tilde{i}_+ \tilde{p}^\ast\cV.\]
	\end{prop}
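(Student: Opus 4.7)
The plan is to follow the strategy of \cite[Claim 3.9.8 (1)]{hayashijanuszewski} and incorporate the filtration. The basic inputs I would rely on are: (a) $p$ is flat, since it is the projection from a smooth $S$-scheme, whence $\tilde{p}$ is flat as its base change and the pullback $\tilde{p}^\ast$ is exact and preserves images; and (b) $\tilde{i}$ is affine, since affineness is preserved under base change (Condition \ref{cond:finitedimension}(i)). These two facts together permit flat base change for $i_\ast$ in the form $p^\ast i_\ast \cong \tilde{i}_\ast \tilde{p}^\ast$ on quasi-coherent modules.

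By construction $G_p i_+ \cV = i_\ast\bigl(\cV \otimes_{i^\cdot \cA} G_p \cA_{Y\to X}\bigr)$. Applying $p^\ast$, flat base change would give
\[p^\ast G_p i_+ \cV \cong \tilde{i}_\ast\, \tilde{p}^\ast\bigl(\cV \otimes_{i^\cdot \cA} G_p \cA_{Y\to X}\bigr).\]
The compatibility of $\tilde{p}^\ast$ with tensor products, combined with the base-change isomorphism $\tilde{p}^\ast i^\cdot \cA \cong \tilde{i}^\cdot p^\cdot \cA$ at the level of tdos, would then reduce the statement to producing a natural identification
\[\tilde{p}^\ast G_p \cA_{Y\to X} \cong G_p (p^\cdot \cA)_{\tilde{Y}\to \tilde{X}}.\]

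For this last identification I would exhibit $G_p \cA_{Y\to X}$ as the image of the left-multiplication map
\[i^\cdot \cA \otimes_{\cO_Y} F_p \cA_{Y\to X} \longrightarrow \cA_{Y\to X},\]
with $F_p \cA_{Y\to X} = i^\ast F_p \cA$. Since $\tilde{p}^\ast$ is exact it preserves images, so it suffices to match the pulled-back map with its counterpart on $\tilde{X}$. The required base-change isomorphisms $\tilde{p}^\ast F_p \cA_{Y\to X} \cong F_p (p^\cdot \cA)_{\tilde{Y}\to\tilde{X}}$ (coming from $F_p(p^\cdot \cA) \cong p^\ast F_p \cA$ together with the commutation of $i^\ast$ with flat pullback) and $\tilde{p}^\ast \cA_{Y\to X} \cong (p^\cdot \cA)_{\tilde{Y}\to\tilde{X}}$ then conclude the argument.

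The main obstacle I anticipate is the bookkeeping: verifying that the base-change isomorphisms for $i_\ast$, for $i^\cdot$ and $i^\ast$, for the tensor product, and for the filtration on $\cA$ all fit together naturally and compatibly. This is precisely the content already handled in the unfiltered case of \cite[Claim 3.9.8 (1)]{hayashijanuszewski}; the only genuinely new point is that the image description of $G_p \cA_{Y\to X}$ interacts well with flat pullback, which is immediate from the exactness of $\tilde{p}^\ast$.
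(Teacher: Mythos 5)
Your overall architecture (flat base change for the affine morphism $i$, compatibility of $\tilde{p}^\ast$ with the tensor product over the tdo, and then an identification of the two $G_p$-filtrations on $(p^\cdot\cA)_{\tilde{Y}\to\tilde{X}}$) is the right one, and matches what the paper intends by ``along the same line as \cite[Claim 3.9.8 (1)]{hayashijanuszewski}''. But the step you single out as the only new point is exactly where your justification breaks down: the asserted isomorphisms $F_p(p^\cdot\cA)\cong p^\ast F_p\cA$ and $\tilde{p}^\ast\cA_{Y\to X}\cong(p^\cdot\cA)_{\tilde{Y}\to\tilde{X}}$ are both false whenever $p$ has positive relative dimension. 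The filtration on a tdo is its order filtration, so $F_1(p^\cdot\cA)/F_0\cong\Theta_{\tilde{X}/S}$, which strictly contains $p^\ast\Theta_{X/S}\cong p^\ast F_1\cA/F_0$; the pullback tdo $p^\cdot\cA$ is genuinely larger than the $\cO$-module pullback $p^\ast\cA$ (already for $X=S$ one has $p^\cdot\cO_S=\cD_{\tilde{X}}$ versus $p^\ast\cO_S=\cO_{\tilde{X}}$). Consequently $G_p(p^\cdot\cA)_{\tilde{Y}\to\tilde{X}}$ is generated by $\tilde{i}^\ast F_p(p^\cdot\cA)$, which is strictly larger than $\tilde{p}^\ast i^\ast F_p\cA$, and your plan of ``matching the pulled-back map with its counterpart on $\tilde{X}$'' does not literally apply: the two multiplication maps have different sources and different generating submodules.

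The conclusion is still correct, but the missing argument is precisely where hypothesis (i) enters beyond mere flatness --- and it is telling that your proof never uses the product structure. For $\tilde{X}\cong X\times_S Z$ one has $\Theta_{\tilde{X}/S}\cong p^\ast\Theta_{X/S}\oplus\Theta_{\tilde{X}/X}$, and since the square is Cartesian, $\tilde{Y}\cong Y\times_S Z$, so the vertical fields $\Theta_{\tilde{X}/X}$ restrict to vector fields tangent to $\tilde{Y}$; hence the extra generators of $F_p(p^\cdot\cA)$ beyond $p^\ast F_p\cA$ act on $(p^\cdot\cA)_{\tilde{Y}\to\tilde{X}}$ through the left $\tilde{i}^\cdot p^\cdot\cA$-action. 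One then checks (e.g.\ in the local coordinates of \cite[Theorem A.2.20]{hayashijanuszewski}, adapted to the product) that $\tilde{i}^\ast F_p(p^\cdot\cA)$ and $\tilde{p}^\ast i^\ast F_p\cA$ generate the \emph{same} left $\tilde{i}^\cdot p^\cdot\cA$-submodule, which is what actually yields $\tilde{p}^\ast G_p\cA_{Y\to X}\cong G_p(p^\cdot\cA)_{\tilde{Y}\to\tilde{X}}$ after noting that $\tilde{p}^\ast$, being exact, carries the image of the multiplication map to the image of the pulled-back map. The same care is needed for your base change of the tensor product, where the ring changes from $\tilde{p}^\ast i^\cdot\cA$ to the larger $\tilde{i}^\cdot p^\cdot\cA$; that part you may legitimately defer to the unfiltered Claim 3.9.8 (1), but the filtered identification cannot be waved through as ``immediate from exactness of $\tilde{p}^\ast$''.
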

	
	We then obtain a $K$-equivariant structure on $G_\bullet i_+\cV$ by formal applications of Proposition \ref{prop:xbc}:
	
	\begin{prop}\label{prop:eq}
		For each integer $p$, $G_pi_+\cV$ is naturally equipped with the structure of a $K$-invariant quasi-coherent submodule of $i_+\cV$.
	\end{prop}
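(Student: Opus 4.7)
The plan is to construct a $K$-equivariant structure on $G_p i_+\cV$ fitting inside the known $K$-equivariant structure on $i_+\cV$ (from \cite[Theorem~3.10.2]{hayashijanuszewski}). Concretely, the $K$-equivariant structure on $i_+\cV$ is an isomorphism $\alpha_{i_+\cV}:a^\ast i_+\cV\cong \pr_2^\ast i_+\cV$ on $K\times_S X$ satisfying a cocycle condition, where $a:K\times_S X\to X$ is the action and $\pr_2$ is the second projection. Saying $G_p i_+\cV$ is $K$-invariant amounts to saying that $\alpha_{i_+\cV}$ restricts to an isomorphism $a^\ast G_p i_+\cV \cong \pr_2^\ast G_p i_+\cV$, in which case the cocycle condition is automatically inherited.

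The key input is that Proposition~\ref{prop:xbc} applies to both $a$ and $\pr_2$ as the bottom arrow. For $\pr_2$ hypothesis (i) is immediate. For $a$, the shear automorphism $(k,x)\mapsto(k,k\cdot x)$ of $K\times_S X$ is an $S$-scheme isomorphism identifying $a$ with $\pr_2$, so (i) is satisfied; hypothesis (ii) holds because the base change $\id_K\times i$ is affine by Condition~\ref{cond:finitedimension}(i). By $K$-equivariance of $i$, the diagram
\[\begin{tikzcd}
K\times_S Y \ar[r, "\id_K\times i"] \ar[d, "a_Y"']
& K\times_S X \ar[d, "a"]\\
Y \ar[r, "i"] & X
\end{tikzcd}\]
is Cartesian (with $a_Y:K\times_S Y\to Y$ the action on $Y$), and the analogous diagram with $\pr_2$ replacing both vertical arrows is Cartesian as well. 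Proposition~\ref{prop:xbc} then yields natural isomorphisms
\[\pr_2^\ast G_p i_+\cV\cong G_p(\id_K\times i)_+\pr_2^\ast \cV,\qquad a^\ast G_p i_+\cV\cong G_p(\id_K\times i)_+ a_Y^\ast\cV.\]

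To conclude, I would apply the functor $G_p(\id_K\times i)_+(-)$ to the $K$-equivariant structure isomorphism $\alpha_\cV:a_Y^\ast\cV\cong \pr_2^\ast\cV$ of $\cV$ as a right $i^\cdot\cA$-module, and compose with the two base-change isomorphisms above to produce the required $\alpha_p:a^\ast G_p i_+\cV\cong \pr_2^\ast G_p i_+\cV$. Quasi-coherence of $G_p i_+\cV$ follows from quasi-coherence of $G_p(\cV\otimes_{i^\cdot\cA}\cA_{Y\to X})$ as a filtered colimit of quasi-coherent modules, pushed forward along the affine morphism $i$. By the naturality of Proposition~\ref{prop:xbc} applied to the inclusion $G_p i_+\cV\hookrightarrow i_+\cV$, $\alpha_p$ is forced to coincide with the restriction of $\alpha_{i_+\cV}$, simultaneously giving $K$-invariance of $G_p i_+\cV$ and the cocycle condition for $\alpha_p$.

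The main obstacle I expect is keeping the bookkeeping of base-change identifications straight — in particular, spelling out carefully why the naturality of the isomorphism in Proposition~\ref{prop:xbc} is enough to pin down $\alpha_p$ as a genuine restriction of $\alpha_{i_+\cV}$, rather than merely some isomorphism between the same two sheaves. This is a formal diagram chase following the pattern of \cite[Theorem~3.10.2]{hayashijanuszewski}, but it is the one step that requires actual verification rather than invocation of earlier results.
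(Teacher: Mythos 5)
Your proposal is correct and follows essentially the same route as the paper, which likewise obtains the $K$-equivariant structure on $G_p i_+\cV$ by formal applications of Proposition~\ref{prop:xbc} to the action and projection morphisms in the pattern of \cite[Theorem 3.10.2]{hayashijanuszewski}, with quasi-coherence handled separately as in \cite[Variant 3.8.8]{hayashijanuszewski}. The details you flag as needing verification (naturality pinning down $\alpha_p$ as the restriction of $\alpha_{i_+\cV}$, and the hypotheses of Proposition~\ref{prop:xbc} for the action map via the shear isomorphism) are exactly the ones the paper also leaves implicit.
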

	
	Note that the quasi-coherence follows by a similar argument to \cite[Variant 3.8.8]{hayashijanuszewski}.
	
	The following result is a consequence of \cite[(6.7) Theorem and (5.6) Theorem]{MR337971}:
	
	\begin{prop}\label{prop:rep}
		Suppose that $x$ is quasi-compact and quasi-separated. Then for each integer $p$, $x_\ast G_pi_+\cV\subset x_\ast i_+\cV$ is naturally equipped with the structure of a subrepresentation of $K$ (cf.\ see the beginning of \cite[Section 3.11]{hayashijanuszewski} for the definition of representations of $K$).
	\end{prop}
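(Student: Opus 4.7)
The plan is to propagate the $K$-equivariant structure on $G_p i_+\cV$ (established in Proposition \ref{prop:eq}) along $x$ to obtain a $K$-representation structure on $x_\ast G_pi_+\cV$ via flat base change, and then to check that this structure embeds into the analogous one on $x_\ast i_+\cV$ as a subrepresentation. Recall that a $K$-equivariant structure on a quasi-coherent $\cO_X$-module $\cF$ amounts to an isomorphism $\alpha\colon a^\ast \cF \xrightarrow{\sim} p^\ast \cF$ on $K\times_S X$ satisfying a cocycle condition, where $a,p\colon K\times_S X\to X$ denote the action and second projection. Representations of $K$ on quasi-coherent $\cO_S$-modules, in the sense of Section 3.11 of \cite{hayashijanuszewski}, admit an analogous description in terms of descent data on $K = K\times_S S$.

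I would begin by applying $(x_K)_\ast$ to $\alpha$, where $x_K\colon K\times_S X\to K$ is the projection. The square
\[\begin{tikzcd}
K\times_S X\ar[r,"p"]\ar[d,"x_K"']&X\ar[d,"x"]\\
K\ar[r,"k"]&S
\end{tikzcd}\]
is Cartesian; since $K$ is smooth (hence flat) over $S$ and $x$ is quasi-compact and quasi-separated, the flat base change theorem --- one of the two cited results of \cite{MR337971} --- produces a natural isomorphism $(x_K)_\ast p^\ast \cong k^\ast x_\ast$. To handle $(x_K)_\ast a^\ast$, I would introduce the $K$-automorphism $\phi\colon K\times_S X\to K\times_S X$, $(g,\xi)\mapsto (g,g\cdot\xi)$, which satisfies $p\circ\phi = a$ and $x_K\circ\phi = x_K$; these relations give $(x_K)_\ast a^\ast \cong (x_K)_\ast p^\ast \cong k^\ast x_\ast$. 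Composing these identifications with $(x_K)_\ast\alpha$ yields an automorphism of $k^\ast x_\ast G_pi_+\cV$, which is the candidate descent datum.

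The second cited result of \cite{MR337971} then converts this descent datum into a bona fide representation of $K$ on $x_\ast G_pi_+\cV$, with the cocycle and unit conditions transported from those satisfied by $\alpha$ via the naturality of base change. Finally, since the inclusion $G_p i_+\cV\hookrightarrow i_+\cV$ is $K$-equivariant by Proposition \ref{prop:eq} and both $x_\ast$ and the base-change isomorphisms are natural, the same construction applied functorially identifies $x_\ast G_pi_+\cV$ with a $K$-subrepresentation of $x_\ast i_+\cV$.

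The main obstacle is the bookkeeping: verifying that the base-change and automorphism-transport isomorphisms assemble into a cocycle meeting the precise definition of representations of $K$ in Section 3.11 of \cite{hayashijanuszewski}, and checking compatibility with the existing representation structure on $x_\ast i_+\cV$ so that the inclusion is honestly a map of representations. The quasi-compactness and quasi-separation hypotheses on $x$ are essential here, as flat base change fails without them; the quasi-coherence of $G_p i_+\cV$ recorded in Proposition \ref{prop:eq} is equally essential for flat base change to apply.
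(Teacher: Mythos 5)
Your proposal is correct and is essentially the paper's own argument: the paper simply declares the proposition to be a consequence of Theorems (5.6) and (6.7) of Altman--Hoobler--Kleiman, which are exactly the quasi-coherence of pushforward along a quasi-compact quasi-separated morphism and the flat base change isomorphism that you invoke to transport the equivariant structure from Proposition \ref{prop:eq} down to $S$. The shearing-automorphism bookkeeping and the naturality argument for the subrepresentation claim are the standard unwinding of that citation.
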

	
	\begin{rem}\label{rem:qs}
		The structure morphism $x$ is quasi-separated if $X$ is locally Noetherian (\cite[Corollary 10.24]{MR4225278}). A typical case in our paper is when $S$ is locally Noetherian since $x$ is locally of finite type (\cite[Proposition 10.9]{MR4225278}).
	\end{rem}
	
	\section{Algebraic properties of $G_\bullet x_\ast i_+\cV$}
	
	In this section, we verify the algebraic properties of the filtered module $G_\bullet x_\ast i_+\cV$ stated in Section \ref{sec:intro}.
	
	Let $S$ be a Dedekind scheme, $i:Y\hookrightarrow X$ be a closed immersion of smooth schemes over $S$, and $\cA$ be a tdo on $X$. Let $\cV$ be a integrable right $i^\cdot\cA$-connection. Moreover, suppose that $Y$ is proper.

	\begin{thm}\label{thm:extendedver}
		\begin{enumerate}
			\renewcommand{\labelenumi}{(\arabic{enumi})}
			\item The $\cO_S$-module $\gr^p_G x_\ast i_+\cV$ is torsion-free for every integer $p$.
			\item For each integer $p$ and a nonempty affine open subscheme $U\subset S$ with coordinate ring $k$, $\Gamma(U,\gr^p_G x_\ast i_+\cV)$ is finitely generated and projective as a $k$-module.
			\item For each integer $p$, $\gr^p_G x_\ast i_+\cV$ is locally free of finite rank as an $\cO_S$-module if $x$ is quasi-compact.
		\end{enumerate}
	\end{thm}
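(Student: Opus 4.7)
The plan is to reduce every statement to a property of the coherent locally free sheaf
\[\cF_p \coloneqq \cV\otimes_{\cO_Y}\Sym^p_{\cO_Y}\cN_{X/Y}\]
on $Y$ by means of Corollary \ref{cor:filt}(1). Applying the left exact functor $y_\ast$ to the short exact sequence
\[0\to G_{p-1}(\cV\otimes_{i^\cdot\cA}\cA_{Y\to X})\to G_p(\cV\otimes_{i^\cdot\cA}\cA_{Y\to X})\to \gr^p_G(\cV\otimes_{i^\cdot\cA}\cA_{Y\to X})\to 0\]
on $Y$, and invoking the identity $x_\ast G_\bullet i_+\cV=y_\ast G_\bullet(\cV\otimes_{i^\cdot\cA}\cA_{Y\to X})$, will produce a canonical injection
\[\gr^p_G x_\ast i_+\cV\hookrightarrow y_\ast\cF_p.\]
The sheaf $\cF_p$ is locally free of finite rank on $Y$ because $\cV$ is (an integrable connection on a smooth $S$-scheme is $\cO_Y$-locally free of finite rank) and because $\cN_{X/Y}$ is the normal sheaf of a closed immersion of smooth $S$-schemes.

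For (1), since $Y$ is smooth and hence flat over $S$, multiplication by any local non-zero-divisor of $\cO_S$ acts injectively on $\cF_p$; by left exactness of $y_\ast$, it acts injectively on $y_\ast\cF_p$ as well. Hence $y_\ast\cF_p$ and its subsheaf $\gr^p_G x_\ast i_+\cV$ are torsion-free.

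For (2), fix a nonempty affine open $U\subset S$ with coordinate ring $k$. Base change makes $y^{-1}(U)\to U$ proper, so Grothendieck's coherence theorem applied to the coherent sheaf $\cF_p$ shows that $\Gamma(y^{-1}(U),\cF_p)$ is finitely generated over the Noetherian ring $k$. Taking $\Gamma(U,-)$ of the injection above then yields
\[\Gamma(U,\gr^p_G x_\ast i_+\cV)\hookrightarrow\Gamma(U,y_\ast\cF_p)=\Gamma(y^{-1}(U),\cF_p),\]
so the left-hand side is finitely generated over $k$ and, by (1), torsion-free. Since $k$ is a regular Noetherian integral ring of dimension $\leq 1$, it is either a field or a Dedekind domain; in either case, finitely generated torsion-free $k$-modules are projective. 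For (3), the quasi-compactness of $x$, together with quasi-separatedness (automatic by Remark \ref{rem:qs} as $S$ is locally Noetherian), implies that $x_\ast$ preserves quasi-coherence, so $\gr^p_G x_\ast i_+\cV$ is quasi-coherent. The conclusion of (2) then translates directly into local freeness of finite rank over $\cO_S$.

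The only nontrivial technical input beyond Corollary \ref{cor:filt}(1) is the local freeness of $\cV$ as an $\cO_Y$-module; the rest amounts to combining standard facts---flatness of smooth morphisms, left exactness of direct images, Grothendieck's coherence theorem for proper morphisms, and the classification of finitely generated modules over Dedekind rings. The main subtlety is to avoid asserting coherence of $\gr^p_G x_\ast i_+\cV$ directly and instead to extract finiteness by embedding sections into $\Gamma(y^{-1}(U),\cF_p)$, which is precisely what the Dedekind hypothesis then upgrades into projectivity.
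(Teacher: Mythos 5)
Your proposal is correct and follows essentially the same route as the paper: embed $\gr^p_G x_\ast i_+\cV$ into $y_\ast(\cV\otimes_{\cO_Y}\Sym^p_{\cO_Y}\cN_{X/Y})$ via Corollary \ref{cor:filt}(1) and left exactness of $y_\ast$, deduce torsion-freeness from flatness of $Y$ over $S$ (the paper packages this as Lemma \ref{lem:flat}), get finite generation from Grothendieck's coherence theorem for the proper morphism $y$, conclude projectivity from the Dedekind hypothesis, and upgrade to local freeness using quasi-coherence of the filtration. The only cosmetic difference is that you argue torsion-freeness of $y_\ast\cF_p$ directly by injectivity of multiplication by non-zero-divisors rather than citing the flatness lemma.
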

	
	For its proof, let us note a general observation on the torsion-free property:
	
	\begin{lem}\label{lem:flat}
		Let $f:W\to Z$ be a flat, quasi-compact, and quasi-separated morphism of schemes, and $\cW$ be a flat quasi-coherent $\cO_W$-module. If $Z$ is integral then $f_\ast\cW$ is torsion-free as an $\cO_Z$-module in the sense of \cite[Chapitre I, (7.4.1)]{MR217083}.
	\end{lem}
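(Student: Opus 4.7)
The plan is to check torsion-freeness stalkwise at each $z \in Z$. Since $Z$ is integral, I may replace $Z$ by an affine open neighborhood of $z$, and so assume $Z = \Spec A$ with $A$ an integral domain. Because $f$ is quasi-compact and quasi-separated, $f_\ast \cW$ is quasi-coherent, so it corresponds to the $A$-module $M \coloneqq \Gamma(W, \cW)$, and the stalk $(f_\ast \cW)_z$ is the localization $M_\mathfrak{p}$ at the prime $\mathfrak{p} \subset A$ corresponding to $z$. Since any localization of a torsion-free module over a domain remains torsion-free, it suffices to show that $M$ itself is torsion-free over $A$.

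For a nonzero $a \in A$, I would show that multiplication by $a$ is injective on the sheaf $\cW$; then left exactness of global sections gives injectivity on $M$. The sheaf-level injectivity is a stalkwise check: for $w \in W$, flatness of $\cW_w$ over $\cO_{W,w}$ together with flatness of $\cO_{W,w}$ over $\cO_{Z, f(w)}$ (which is the hypothesis that $f$ is flat) compose to flatness of $\cW_w$ over $\cO_{Z, f(w)}$. The latter local ring is a localization of the domain $A$ inside $\Frac(A)$, so the image of the nonzero $a$ there is a nonzerodivisor, and flatness transfers this nonzerodivisor property to $\cW_w$, as desired.

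I do not expect serious obstacles. The substantive content is the standard fact that flat modules over an integral domain are torsion-free; the only care needed is in invoking the qcqs hypothesis precisely at the step where we want $f_\ast \cW$ to be quasi-coherent, so that its stalks can be computed as localizations of the global sections $M$. The integrality of $Z$ is used twice: once to reduce to an affine $\Spec A$ with $A$ a domain, and once (implicitly) to identify nonzerodivisors in $\cO_{Z,z}$ with nonzero elements.
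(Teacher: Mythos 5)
Your proof is correct and follows essentially the same route as the paper: reduce to affine $Z=\Spec A$ with $A$ a domain, use quasi-compactness and quasi-separatedness to get quasi-coherence of $f_\ast\cW$, and then exploit that flatness of $\cW_w$ over $\cO_{Z,f(w)}$ (by composing the two flatness hypotheses) makes multiplication by a nonzero $a\in A$ injective locally on $W$. The only cosmetic difference is in the local-to-global step on $W$: you phrase it as stalkwise injectivity of multiplication by $a$ followed by left exactness of $\Gamma(W,-)$, whereas the paper embeds $\Gamma(W,\cW)$ into $\prod_\lambda\Gamma(U_\lambda,\cW|_{U_\lambda})$ over an affine cover and reduces to the affine case; these are interchangeable.
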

	
	\begin{proof}
		We may assume $Z$ affine. Write $Z=\Spec R$. Since $f_\ast\cW$ is a quasi-coherent $\cO_Z$-module (\cite[(5.6) Theorem]{MR337971}), it will suffice to show that $\Gamma(Z,f_\ast \cW)=\Gamma(W,\cW)$ is a torsion-free $R$-module. Choose an affine open covering $W=\cup_\lambda U_\lambda$. Then $\Gamma(W,\cV)$ is identified with an $R$-submodule of $\prod_\lambda\Gamma(U_\lambda,\cW|_{U_\lambda})$. We may therefore assume $W$ affine. In this case, $\Gamma(W,\cV)$ is a flat $R$-module, in particular, torsion-free as an $R$-module. This completes the proof.
	\end{proof}

	\begin{proof}[Proof of Theorem \ref{thm:extendedver}]
		We first study $y_\ast \gr^p_G (\cV\otimes_{i^\cdot\cA}\cA_{Y\to X})$. Since $\cV$ and $\cN_{X/Y}$ are locally free $\cO_Y$-modules of finite rank, so is
		\[\cV\otimes_{\cO_Y}\Sym^p_{\cO_Y} \cN_{X/Y}.\]
		Hence $y_\ast \gr^p_G (\cV\otimes_{i^\cdot\cA}\cA_{Y\to X})$ is torsion-free (resp.\ coherent) by Corollary \ref{cor:filt} (1) and Lemma \ref{lem:flat} (resp.\ and \cite[Th\`eor\'eme (3.2.1)]{MR217085}).
		
		Part (1) is now obtained by regarding $\gr^p_G x_\ast i_+\cV$ as an $\cO_S$-submodule of $y_\ast \gr^p_G (\cV\otimes_{i^\cdot\cA}\cA_{Y\to X})$. For (2), let $U$ be an arbitrary nonempty affine open subscheme of $S$ with coordinate ring $k$. Then one can show in a similar way to (1) that $\Gamma(U,\gr^p_G x_\ast i_+\cV)$ is a finitely generated and torsion-free $k$-module. Since $k$ is a Dedekind domain, $\Gamma(U,\gr^p_G x_\ast i_+\cV)$ is a finitely generated and projective $k$-module (\cite[Proposition B.89 (4)]{MR4225278}). This shows (2).
		
		Finally, we prove (3). Recall that the filtration $G_\bullet x_\ast i_+\cV$ consists of quasi-coherent $\cO_S$-modules by Proposition \ref{prop:rep} (put $K=S$). The assertion now follows from (2).
	\end{proof}
	
	\begin{rem}
		If $Y$ is integral, the same argument works under the hypothesis that $\cV$ is a right $i^\cdot\cA$-module which is torsion-free and coherent as an $\cO_Y$-module by using \cite[Chapitre I, Proposition (7.4.5)]{MR217083}.
	\end{rem}

	We deduce Corollary \ref{cor:proj} by applying the following general fact:
	
	\begin{lem}\label{lem:mittag-leffler}
		Let $k$ be a commutative ring. Let $V$ be a $k$-module, equipped with an exhaustive filtration $G_\bullet V$. If the associated graded $k$-modules $\gr^p_G V$ are projective for all $p$, so is $V$.
	\end{lem}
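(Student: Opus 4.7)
The plan is to use the projectivity of each graded piece to split every step of the filtration, assemble these splittings into a direct sum decomposition of $V$, and then conclude projectivity from closure of projective modules under direct sums.

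First I would observe that, for each integer $p\geq 0$, the short exact sequence
\[
0\to G_{p-1}V\to G_pV\to \gr^p_G V\to 0
\]
of $k$-modules splits, because $\gr^p_G V$ is projective by hypothesis. Fix for each $p$ a $k$-linear section $s_p\colon \gr^p_G V\to G_pV$ of the projection $G_pV\twoheadrightarrow \gr^p_G V$. Using the convention $G_{-1}V=0$ from the notation section, $s_0$ is simply a splitting of an isomorphism, and in general we obtain an internal direct sum decomposition $G_pV=G_{p-1}V\oplus s_p(\gr^p_G V)$.

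Next I would define a $k$-linear map
\[
\Phi\colon \bigoplus_{p\geq 0}\gr^p_G V\longrightarrow V,\qquad (x_p)_p\longmapsto \sum_p s_p(x_p),
\]
which is well defined because only finitely many $x_p$ are nonzero. The core step is to prove $\Phi$ is an isomorphism. For injectivity, if $\sum_{p\leq N}s_p(x_p)=0$ with $x_N\neq 0$ the largest nonzero index, then projecting the equality modulo $G_{N-1}V$ gives $x_N=0$, a contradiction, so by induction on $N$ all components vanish. For surjectivity, since $G_\bullet V$ is exhaustive, every $v\in V$ lies in some $G_N V$; I would then induct on $N$, the base $N=0$ being immediate from $G_{-1}V=0$, and the inductive step writing $v=s_N(\bar v)+(v-s_N(\bar v))$ where $\bar v$ is the image of $v$ in $\gr^N_G V$ and the second summand lies in $G_{N-1}V$.

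Having identified $V\cong \bigoplus_p \gr^p_G V$, the conclusion is immediate: a direct sum of projective $k$-modules is projective. The only potentially subtle point is ensuring the infinite sum defining $\Phi$ makes sense and that surjectivity is compatible with the external direct sum; both are handled cleanly by the positivity condition $G_{-1}V=0$ and the exhaustivity of $G_\bullet V$, so I do not expect any serious obstacle here—the Mittag-Leffler flavor of the statement is entirely absorbed into the splitting of each finite stage.
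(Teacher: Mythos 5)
Your proof is correct and complete. Note that the paper does not actually write out an argument for this lemma; it only refers to ``a standard argument on the Mittag-Leffler condition,'' which suggests the intended route is the homological one: each $G_pV$ is projective by induction (since $G_pV\cong G_{p-1}V\oplus\gr^p_GV$ once the stage-$p$ sequence splits), the tower $\Hom(G_pV,M)$ then has surjective transition maps and hence satisfies the Mittag-Leffler condition, so the $\varprojlim^1$-term in the Milnor sequence for $\operatorname{Ext}^1(\varinjlim G_pV,M)$ vanishes and $\operatorname{Ext}^1(V,M)=0$ for all $M$. Your argument replaces this machinery with a direct construction: the same splittings $s_p$ are assembled into an explicit isomorphism $\bigoplus_{p\geq 0}\gr^p_GV\cong V$, after which projectivity is immediate from closure of projectives under direct sums. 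Both proofs hinge on exactly the same input (projectivity of $\gr^p_GV$ splits each stage, and positivity plus exhaustivity let you start and finish the induction); yours is the more elementary and self-contained of the two, and in fact proves the slightly stronger statement that $V$ is isomorphic to its associated graded module. The only point worth being careful about, which you handle correctly, is that injectivity and surjectivity of $\Phi$ are checked by descending from the top nonzero filtration degree, which is legitimate precisely because $G_{-1}V=0$.
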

	
	Indeed, $V=\Gamma(X,i_+\cV)$ and $G_\bullet V=\Gamma(X,G_\bullet i_+\cV)$ satisfy the conditions of Lemma \ref{lem:mittag-leffler} under the hypothesis of Corollary \ref{cor:proj} by Theorem \ref{thm:extendedver} and Corollary \ref{cor:filt} (3).

	\appendix
	\section{Compute $\gr_G i_+\cV$}\label{sec:affimm}
	
	Recall that F.~V.~Bien proved an isomorphism $\gr_G i_+\cV\cong i_\ast(\cV\otimes_{\cO_Y} \Sym_{\cO_Y} \cN_{X/Y})$ in \cite{MR1082342} if $i$ is a closed immersion. Indeed, this isomorphism holds over general bases if $i$ is a closed immersion since $i_\ast$ is then exact.
	
	In this section, assume Condition \ref{cond:finitedimension} and $\cV$ to be quasi-coherent. We aim to prove the isomorphism $\gr_G i_+\cV\cong i_\ast(\cV\otimes_{\cO_Y} \Sym_{\cO_Y} \cN_{X/Y})$ in this setting.
	
	\begin{lem}\label{lem:vanishing}
		Let $\cW$ be an $(i^\cdot\cA,i^{-1}\cO_X)$-bimodule.
		Suppose that $\cW$ is flat and quasi-coherent as a left $i^\cdot\cA$-module. Then $R^p i_\ast(\cV\otimes_{i^\cdot\cA} \cW)$ vanishes unless $p=0$.
	\end{lem}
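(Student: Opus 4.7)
The plan is to reduce the vanishing to the standard fact that for an affine morphism, higher direct images of quasi-coherent sheaves vanish. Since $i$ is affine by Condition \ref{cond:finitedimension} (i), it suffices to show that $\cV\otimes_{i^\cdot\cA}\cW$ is quasi-coherent as an $\cO_Y$-module. The flatness hypothesis on $\cW$ will not actually be used for this particular lemma (it is the input that will matter for the identification of $\gr_G i_+\cV$ in the sequel).

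First, I would observe that $\cW$, being quasi-coherent as a left $i^\cdot\cA$-module and given that $i^\cdot\cA$ is itself quasi-coherent as an $\cO_Y$-algebra, is automatically quasi-coherent as an $\cO_Y$-module. Next, I would choose a presentation of $\cV$ in right $i^\cdot\cA$-modules
\[F_1\to F_0\to \cV\to 0,\]
where each $F_j$ is a direct sum of copies of $i^\cdot\cA$; such a presentation exists by the standard sheafy construction (surject a free right $i^\cdot\cA$-module onto $\cV$, take the kernel, and surject again). Apply the right-exact functor $-\otimes_{i^\cdot\cA}\cW$ to obtain an exact sequence
\[\bigoplus \cW\to \bigoplus \cW\to \cV\otimes_{i^\cdot\cA}\cW\to 0\]
on $Y$, where I have used that tensor product commutes with direct sums and that $F_j\otimes_{i^\cdot\cA}\cW$ is a direct sum of copies of $\cW$ by the freeness of $F_j$. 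Since the full subcategory of quasi-coherent $\cO_Y$-modules is closed under arbitrary direct sums and cokernels, $\cV\otimes_{i^\cdot\cA}\cW$ is quasi-coherent.

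With quasi-coherence established, I would conclude by citing the classical vanishing: for an affine morphism $i$ and a quasi-coherent $\cO_Y$-module $\cF$, one has $R^p i_\ast\cF=0$ for all $p>0$. Applied to $\cF=\cV\otimes_{i^\cdot\cA}\cW$, this gives the desired statement.

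The only mild subtlety is the sheaf-theoretic construction of a free presentation of $\cV$ as a right $i^\cdot\cA$-module, but this is entirely formal; the rest is a routine chase through exactness and closure properties of the quasi-coherent subcategory. I do not expect any substantive obstacle.
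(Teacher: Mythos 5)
There is a genuine gap at the step where you conclude that $\cV\otimes_{i^\cdot\cA}\cW$ is quasi-coherent as an $\cO_Y$-module because it is a cokernel of a map of quasi-coherent modules. The sheaf $\cV\otimes_{i^\cdot\cA}\cW$ carries no evident $\cO_Y$-module structure in the first place: the left $i^\cdot\cA$-action on $\cW$ is consumed by the tensor product, and the surviving right $i^{-1}\cO_X$-action does not factor through $\cO_Y$ (already for $\cW=\cA_{Y\to X}$, right multiplication by a function $g$ vanishing on $Y$ sends $1\otimes\partial$ to $1\otimes\partial(g)$, which is nonzero in general). So what must be shown is that the underlying abelian sheaf \emph{admits} some quasi-coherent $\cO_Y$-structure, and your presentation argument does not deliver this: a right $i^\cdot\cA$-linear map between free modules is given by a matrix acting by left multiplication, so the induced map $\bigoplus\cW\to\bigoplus\cW$ is left multiplication by sections of $i^\cdot\cA$, which does not commute with the $\cO_Y$-action on $\cW$ (one has $b(fw)=f(bw)+[b,f]w$ with $[b,f]\neq 0$ when $b$ has positive order). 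Hence your cokernel is only a cokernel of abelian sheaves, not of $\cO_Y$-modules, and the closure of the quasi-coherent subcategory under cokernels does not apply. (A smaller point: a global surjection from a free right $i^\cdot\cA$-module onto $\cV$ need not exist when $Y$ is not affine; one must localize first, which is harmless but should be said.)

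This gap is precisely what the two hypotheses you propose to discard are there to circumvent. The paper first localizes so that $X$ (hence $Y$) is affine and $\bR i_\ast$ is bounded, takes a possibly unbounded free resolution $\cF^\bullet\to\cV$ of right $i^\cdot\cA$-modules, and uses the flatness of $\cW$ to guarantee that $\cF^\bullet\otimes_{i^\cdot\cA}\cW$ remains quasi-isomorphic to $\cV\otimes_{i^\cdot\cA}\cW$. Each term $\cF^j\otimes_{i^\cdot\cA}\cW\cong\bigoplus\cW$ genuinely does admit a quasi-coherent structure, so its higher direct images vanish, and the boundedness of $\bR i_\ast$ from Condition \ref{cond:finitedimension} (ii) is what lets this termwise vanishing propagate (via stupid truncations) to $R^p i_\ast(\cV\otimes_{i^\cdot\cA}\cW)=0$ for $p>0$ even though the resolution is unbounded below. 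So the flatness of $\cW$ is not idle in this lemma: it is the mechanism that trades the problematic sheaf $\cV\otimes_{i^\cdot\cA}\cW$ for a complex of sheaves whose quasi-coherence is visible. Repairing your route would require an independent proof that $\cV\otimes_{i^\cdot\cA}\cW$ admits a quasi-coherent $\cO_Y$-structure, which is essentially the content of Theorem \ref{thm:computegri_+} that this lemma is meant to establish.
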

	
	\begin{proof}
		It follows by definition that the $p$th derived functor vanishes if $p<0$. We see the assertion for $p>0$. We may assume the following conditions by the hypothesis (ii) in Condition \ref{cond:finitedimension}:
		\begin{enumerate}
			\renewcommand{\labelenumi}{(\roman{enumi})}
			\item $X$ is affine;
			\item $\bR i_\ast:D(i^{-1}\cO_X)\to D(\cO_X)$ is bounded in the sense of \cite[Definition A.6.2]{hayashijanuszewski}.
		\end{enumerate}
		One can see by taking a free resolution
		$\cF^\bullet\to \cV\to 0$ of right $i^\cdot\cA$-modules that the proof is completed by showing $R^p i_\ast(\cF^\bullet\otimes_{i^\cdot\cA} \cW)=0$ for $p>0$ and complexes $\cF^\bullet$ of free right $i^\cdot\cA$-modules concentrated in nonpositive degrees. We may and do assume $\cF^\bullet=\cF$ is concentrated in degree zero by applying stupid truncations. In this case, the assertion follows
		from \cite[Th\'eor\`eme (1.3.1)]{MR217085} since the sheaf $\cF\otimes_{i^\cdot\cA} \cW$ of abelian groups on $Y$ admits the structure of a quasi-coherent $\cO_Y$-module (see \cite[Proposition 2.2.2 and the paragraph below Proposition A.4.13]{hayashijanuszewski}.
	\end{proof}

	\begin{thm}\label{thm:computegri_+}
		There is a canonical isomorphism
		\[\gr_G i_+\cV\cong
		i_\ast(\cV\otimes_{\cO_Y} \Sym_{\cO_Y} \cN_{X/Y})\]
		of $\cO_X$-modules.
	\end{thm}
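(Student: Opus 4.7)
The plan is to apply $i_\ast$ to the short exact sequences from Corollary \ref{cor:filt}(1) and leverage Lemma \ref{lem:vanishing} to preserve exactness. Concretely, for each integer $p$, Corollary \ref{cor:filt}(1) furnishes a short exact sequence of right $i^{-1}\cO_X$-modules
\[0\to \cV\otimes_{i^\cdot\cA} G_{p-1}\cA_{Y\to X} \to \cV\otimes_{i^\cdot\cA} G_p\cA_{Y\to X}\to \cV\otimes_{\cO_Y}\Sym^p_{\cO_Y}\cN_{X/Y}\to 0.\]
If $R^1 i_\ast$ of the leftmost term vanishes, then applying $i_\ast$ will yield a short exact sequence in whose cokernel position $\gr^p_G i_+\cV$ is naturally identified with $i_\ast(\cV\otimes_{\cO_Y}\Sym^p_{\cO_Y}\cN_{X/Y})$; assembling these identifications over all $p$ will then give the desired graded isomorphism.

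The main technical step is to verify the hypothesis of Lemma \ref{lem:vanishing} for $\cW = G_p\cA_{Y\to X}$. I would show by induction on $p$ that each $G_p\cA_{Y\to X}$ is flat and quasi-coherent as a left $i^\cdot\cA$-module. The base case $p<0$ is immediate since $G_{-1}\cA_{Y\to X}=0$ by positivity of the filtration ($F_{-1}\cA=0$). For the inductive step, Theorem \ref{thm:gr_GAYX} identifies $\gr^p_G\cA_{Y\to X}$ with $i^\cdot\cA\otimes_{\cO_Y}\Sym^p_{\cO_Y}\cN_{X/Y}$, and since $\cN_{X/Y}$ is locally free of finite rank on the smooth $S$-scheme $Y$, this graded piece is locally free of finite rank over $i^\cdot\cA$, hence flat and quasi-coherent. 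Both flatness and quasi-coherence are preserved under extensions, so these properties propagate from $G_{p-1}\cA_{Y\to X}$ to $G_p\cA_{Y\to X}$ via the defining short exact sequence $0\to G_{p-1}\cA_{Y\to X}\to G_p\cA_{Y\to X}\to \gr^p_G\cA_{Y\to X}\to 0$.

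Once this inductive verification is in place, Lemma \ref{lem:vanishing} will supply the required vanishing $R^1 i_\ast(\cV\otimes_{i^\cdot\cA} G_{p-1}\cA_{Y\to X})=0$, and the long exact sequence will collapse to the short exact sequence needed for the identification in the first paragraph. The main subtlety I anticipate lies in the bookkeeping of bimodule structures --- making sure that the relevant short exact sequence is one of $(i^\cdot\cA, i^{-1}\cO_X)$-bimodules, that the left $i^\cdot\cA$-flatness of the graded piece is precisely what is needed so that $\cV\otimes_{i^\cdot\cA}-$ preserves exactness, and that the filtration passes correctly through $i_\ast$ so that $\gr^p_G i_+\cV$ really coincides with $i_\ast$ of the cokernel. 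No genuinely new idea beyond those already present in Section \ref{sec:filt} and Lemma \ref{lem:vanishing} should be required.
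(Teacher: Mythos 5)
Your proposal is correct and follows essentially the same route as the paper: apply $i_\ast$ to the graded short exact sequences and use Lemma \ref{lem:vanishing} to kill the $R^1$ term, then invoke Theorem \ref{thm:gr_GAYX} to identify the graded pieces. The only (harmless) deviation is that you establish flatness and quasi-coherence of $G_{p}\cA_{Y\to X}$ by induction via extensions, whereas the paper reads off its local freeness as a left $i^\cdot\cA$-module directly from the free basis exhibited in the proof of Theorem \ref{thm:gr_GAYX}.
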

	
	\begin{proof}
		For each integer $p$, consider the short exact sequence
		\[0\to \cV\otimes_{i^\cdot\cA} G_{p-1} \cA_{Y\to X} 
		\to \cV\otimes_{i^\cdot\cA} G_{p} \cA_{Y\to X}
		\to \cV\otimes_{i^\cdot\cA} \gr^{p}_G \cA_{Y\to X}
		\to 0.\]
		It gives rise to an exact sequence
		\[\begin{split}
			0\to i_\ast(\cV\otimes_{i^\cdot\cA} G_{p-1} \cA_{Y\to X})
			\to i_\ast(\cV\otimes_{i^\cdot\cA} G_{p} \cA_{Y\to X})
			&\to i_\ast(\cV\otimes_{i^\cdot\cA} \gr^{p}_G \cA_{Y\to X})\\
			&\to R^1 i_\ast (\cV\otimes_{i^\cdot\cA} G_{p-1} \cA_{Y\to X}).
		\end{split}
		\]
		One can see from the proof of Theorem \ref{thm:gr_GAYX} that $G_{p-1}\cA_{Y\to X}$ is locally free as a left $i^\cdot\cA$-module. In particular, $G_{p-1}\cA_{Y\to X}$ is a flat and quasi-coherent left $i^\cdot\cA$-module. Hence we can apply Lemma \ref{lem:vanishing} to $\cW=G_{p-1}\cA_{Y\to X}$ to show that the first derived functor $R^1 i_\ast (\cV\otimes_{i^\cdot\cA} G_{p-1} \cA_{Y\to X})$ vanishes. The assertion now follows as
		\[\gr^p_G i_+\cV\cong 
		i_\ast(\cV\otimes_{i^\cdot\cA} \gr^{p}_G \cA_{Y\to X}) \cong
		i_\ast(\cV\otimes_{\cO_Y} \Sym^p_{\cO_Y} \cN_{X/Y})\]
		(use Theorem \ref{thm:gr_GAYX} for the last isomorphism).
	\end{proof}

\end{document}